\theoremstyle{plain}
\newtheorem{Thm}{Theorem}
\begin{document}

\title[warped ancient Ricci solutions]
{Liouville theorem for warped ancient Ricci solutions}

\author{Li Ma, Anqiang Zhu}

\address{Department of mathematics \\
Henan Normal university \\
Xinxiang, 453007 \\
China}
\email{nuslma@gmail.com}

\address{Department of mathematics \\
Wuhan university \\
Wuhan, 430072 \\
China} \email{aqzhu.math@whu.edu.cn}

\thanks{The research is partially supported by the National Natural Science
Foundation of China (N0.11271111) and the National Natural Science
Foundation of China (NO.11301400)}

\begin{abstract}
In this note, we answer affirmatively the question if a warped product of a compact manifold with a line as an ancient solution to the Ricci flow is trivial. We also consider the global behavior of the Type III warping product Ricci flow.

{ \textbf{Mathematics Subject Classification 2000}: 53C44,57M50}

{ \textbf{Keywords}: ancient solution, Ricci flow, warped product, Lioville theorem, type III}
\end{abstract}

 \maketitle

 \section{Introduction}

 In the interesting work \cite{L}, B.List has introduced List's flow on a warped product space and proved that the finite time blow up of his flow (in case of the base manifold being closed) is an ancient solution of List flow (see Cor. 7.10 \cite{L}). Hence it is interesting to study the more properties of the ancient solution of Ricci flow, which is one topic of this note. The warped product spaces have their own geometric features, which are related to the concept of Ricci-Bakry-Emery curvature, see \cite{L}, \cite{C}, \cite{H}, and \cite{MZ}. In \cite{H}, global behavior of the Ricci flow on square torus bundle has been studied. In \cite {C}, through the estimate of positive lower bound of isoperimetric ratio on $S^2$, Cao has showed that the cigar times the real line can not be the finite blow-up limit of the Ricci flow on the warped product Ricci flow on $S^\times S^1$. In \cite{MZ}, Perelman's Lambda constant and eigenvalues of Laplacian operators between base spaces and whole spaces have been studied. One may refer to \cite{W} about the relation between mean curvature and Ricci-Bakry-Emery curvature on Riemannian measure spaces.

In the recent work \cite{LS}, J.Lott and N.Sesum have considered the three-dimensional warped product Ricci flows, which are also called the Ricci flows with symmetry and the flows differ from List's flows \cite{L} by Lie derivatives. Lott and Sesum can get the long time behavior of solutions of the 3-dimensional Ricci flows with symmetry. From their works, one can see that the Ricci flows with symmetry enjoy more better properties than general Ricci flows. We consider in this note two kinds of Ricci flows with symmetry, i.e., one is an ancient solution with symmetry of any dimension and the other is the 4-dimensional type III Ricci flow with symmetry.

In the first part of this short note, we answer affirmatively the question if an ancient solution of a warped product of a compact manifold with a line to the Ricci flow is trivial, see Theorem \ref{thm1} below for precise statement. Generally speaking, the proof may goes via the use of both Li-Yau type gradient estimate and Bernstein type estimate \cite{CH, CZ,K, M2, LY, Chow, Ni, M, LS, P02}. Recall that an ancient solution to the Ricci flow is a family of Riemannian metrics $(g(t)), -\infty<t<0$ on the manifold M such that
\begin{equation}\label{Ric-one}
\partial_fg=-2Rc(g), \ \ g=g(t),\ \ -\infty<t<0.
\end{equation}
In the second part of this note, we consider the global behavior of the Type III warping product Ricci flow. Note that there are many warping product spaces which are Ricci gradient Ricci solitons of the form $(R_+\times N,dr^2+f(r)^2h)$, where $(N,h)$ is a compact Einstein manifold and $f:R_+\to R$ is smooth function on half real line. For example, the Gaussian shrinking soliton on $R^n$ is of this form \cite{P02}. In this paper, we consider the warped product spaces of the form $(N\times R, h+e^{2u}ds^2)$, where $u:N\to R$ is a smooth function on the compact manifold $N$.

\section{Main results and proofs}

 Precisely, our question is if the warped product $M=N\times_u R$, where $u:N\times (-\infty,0)\to R$ is a smooth function on the compact manifold $N$, and $g(t)=h(t)+e^{2u(t)}ds^2$ with $h(t)$ being Riemannian metrics on $N$, can be an ancient solution to (\ref{Ric-one}). Let $n=dim N\geq 2$. We show the following Liouville type theorem.

\begin{Thm}\label{thm1} Assume that the warped product $M=N\times_u R$ with $g(t)=h(t)+e^{2u(t)}ds^2$ is an ancient solution to the Ricci flow (\ref{Ric-one}). If $N$ is compact, then the warped function $u$ is a constant.
 That is, $M$ is a direct product with $(N,h(t))$ being an ancient solution to Ricci flow.
 \end{Thm}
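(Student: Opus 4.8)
The plan is to reduce the coupled geometric system defining a warped ancient solution to a single scalar Riccati-type inequality on the compact fibre $N$, and then to exploit that the solution lives on the whole half-line $(-\infty,0)$. First I would compute the Ricci tensor of $g=h+e^{2u}ds^2$ on $N\times\mathbb{R}$ from the warped-product (O'Neill) formulas: for $X,Y$ tangent to $N$,
\begin{equation*}
Rc(g)(X,Y)=Rc(h)(X,Y)-\nabla^2 u(X,Y)-du(X)\,du(Y)
\end{equation*}
(the Bakry--\'Emery-type tensor of $h$), while $Rc(g)(\partial_s,\partial_s)=-e^{2u}\bigl(\Delta_h u+|\nabla_h u|^2\bigr)$ and the mixed components vanish. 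Feeding this into \eqref{Ric-one} turns the Ricci flow into the system
\begin{equation*}
\partial_t h=-2\,Rc(h)+2\nabla^2 u+2\,du\otimes du,\qquad \partial_t u=\Delta_h u+|\nabla_h u|^2,
\end{equation*}
where the second equation is equivalent to saying that $w:=e^{u}>0$ solves the linear (time-dependent) heat equation $\partial_t w=\Delta_{h(t)}w$ on $N$; up to a $t$-dependent diffeomorphism this is exactly List's flow (cf.\ \cite{L,LS}). So it suffices to show that the positive ancient solution $w$ of this heat equation is spatially constant, i.e.\ $u=u(t)$.

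The key step is the evolution equation for $Q:=|\nabla_h u|^2\ge 0$. Differentiating $Q=h^{ij}\partial_i u\,\partial_j u$ in time, using $\partial_t h^{ij}=2R^{ij}-2\nabla^i\nabla^j u-2\nabla^i u\,\nabla^j u$ together with the equation for $u$ and the Bochner formula $\frac12\Delta_h Q=|\nabla^2 u|^2+\langle\nabla_h u,\nabla_h\Delta_h u\rangle+Rc(h)(\nabla_h u,\nabla_h u)$, one checks that the two occurrences of $Rc(h)(\nabla_h u,\nabla_h u)$ cancel and obtains
\begin{equation*}
\partial_t Q=\Delta_h Q+\langle\nabla_h u,\nabla_h Q\rangle-2Q^2-2|\nabla^2 u|^2\ \le\ \Delta_h Q+\langle\nabla_h u,\nabla_h Q\rangle-2Q^2.
\end{equation*}
Note that no curvature hypothesis on $N$ is used; that cancellation is exactly why the warped (List-type) flow is the ``good'' equation here.

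Next I would run the maximum principle on the compact manifold $N$. Put $y(t):=\max_{N}Q(\cdot,t)\ge 0$, which is locally Lipschitz; at a point realising the spatial maximum one has $\nabla_h Q=0$ and $\Delta_h Q\le 0$, so Hamilton's trick gives $\frac{d}{dt}y\le -2y^2$ in the barrier sense for $t\in(-\infty,0)$. Because the solution is ancient, $y$ must vanish identically: if $y(t_0)>0$, then $y>0$ on all of $(-\infty,t_0]$ and there $1/y$ is nondecreasing with forward derivative $\ge 2$, so $1/y(t)\le 1/y(t_0)-2(t_0-t)$, which is negative once $t$ is sufficiently negative --- a contradiction. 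Hence $Q\equiv 0$ and $u=u(t)$ depends only on time.

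Finally, with $u=u(t)$ the equation $\partial_t u=\Delta_h u+|\nabla_h u|^2$ collapses to $\partial_t u=0$, so $u$ is a genuine constant; then $\nabla^2 u\equiv 0$ and $du\equiv 0$, the metric equation reduces to $\partial_t h=-2\,Rc(h)$, and $g(t)=h(t)+e^{2c}\,ds^2$ is --- after rescaling the line coordinate --- the Riemannian product $(N,h(t))\times(\mathbb{R},ds^2)$ with $h(t)$ an ancient solution of the Ricci flow on $N$, which is Theorem \ref{thm1}. I expect the main obstacle to be the careful derivation of the evolution equation for $Q$ (securing the sign-definite reaction term $-2Q^2$ with the curvature terms genuinely cancelling), together with the realisation that the ancientness --- the infinite past --- is exactly what makes the argument work: on a finite interval $(-T,0)$ the Riccati inequality $\dot y\le-2y^2$ admits many positive solutions, so no Liouville statement can hold there.
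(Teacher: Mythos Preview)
Your proof is correct and follows essentially the same route as the paper: both derive the key evolution inequality $(\partial_t-\Delta_h)|\nabla u|^2\le -2|\nabla u|^4$ (plus a harmless drift term in your gauge), apply the maximum principle on the compact $N$ to get the Riccati bound $\sup_N|\nabla u|^2\le \frac{1}{2(t-\alpha)}$, and send $\alpha\to-\infty$. The only cosmetic difference is that the paper first passes by a time-dependent diffeomorphism to List's flow $\partial_t h=-2Rc(h)+2\nabla u\otimes\nabla u$, $\partial_t u=\Delta_h u$ and then quotes List's Lemma~3.2 for the evolution of $|\nabla u|^2$, whereas you stay in the raw Ricci-flow gauge and compute the Bochner-type identity directly; the extra $\langle\nabla u,\nabla Q\rangle$ term you pick up vanishes at the spatial maximum and does not affect the argument.
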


\begin{proof}
Assume that the warped product $M=N\times_u R$ with $g(t)=h(t)+e^{2u(t)}ds^2$ and $N$ compact is an ancient solution to the Ricci flow.
In this case, it is well-known that the Ricci flow equation is equivalent to the the List flow
\begin{equation}\label{Ric-2}
\partial_t h=-2Rc(h)+2\nabla u\bigotimes \nabla u,
\end{equation}
\begin{equation}\label{heat}
\partial_t u=\Delta_hu.
\end{equation}

In below, we use the flow (\ref{Ric-2}-\ref{heat}).
By Lemma 3.2 in \cite{L}, we know that
\begin{equation}\label{grad-1}
(\partial_t-\Delta_h)|\nabla u|^2=-2|D^2u|^2-2|\nabla u|^4.
\end{equation}

By applying the maximum principle to (\ref{grad-1}), we have
$$
\sup_N|\nabla u|^{2}(t)\leq \frac{1}{2(t-\alpha)}.
$$
Let $\alpha\rightarrow -\infty$ and we have $|\nabla u|=0$ on $M\times (-\infty,0)$, which implies that $u$ depends only on time variable and by the heat equation (\ref{heat}) we know that $u_t=0$.
Hence, $u$ is a constant on $M\times (-\infty,0)$.

This completes the proof of Theorem \ref{thm1}.

\end{proof}

 We remark that in the case when $N$ is complete non-compact, we need to assume that the curvature of $(N,h(t))$ is locally bounded and  $|\nabla u|^2$ is locally bounded and $|\nabla u|^2(x,t)\to 0$ as $|x|\to \infty$ for any $t\in (-\infty,0)$ so that the maximum principle can be used.

In the remaining part of this note, we consider the 4-dimensional type III Ricci flow $(M,g(t))$ of the warping spaces with positive scalar curvature. By Type III, we means a global Ricci flow $(g(t))$, $0<t<\infty$, with the curvature bound
$$
\sup_{M\times (0,\infty)} t|Rm|(g(t))=A<\infty.
$$

 Note that
$$
(\partial_t-\Delta_h)u^2=-2|\nabla u|^2.
$$
We have for any $\alpha\in (-\infty,0)$,
$$
(\partial_t-\Delta_h) [(t-\alpha)|\nabla u|^2+u^2]\leq 0.
$$
Fix $\alpha>0$, by using the maximum principle that
$$
(t-\alpha)|\nabla u|^2(t)+u^2(t)\leq C^2, \ \ for \ \ t>\alpha,
$$
which implies that
$$
|\nabla u|^2(t)\leq \frac{C^2}{t-\alpha}\to 0
$$
as $t \to\infty$.
As in \cite{L}, we let
$$
S_{ij}=R_{ij}-u_iu_j
$$
where $R_{ij}$ is the Ricci tensor of the metric $h$
and the scalar of $g(t)$ is
$$
S=R-|\nabla u|^2,
$$
where $R$ is the scalar curvature of $N$.

By (3.8) in Lemma 3.2 in \cite{L}, we know that there some uniform constant $C>0$ such that
\begin{equation}\label{second}
(\partial_t-\Delta_h)|D^2u|^2\leq -2|\nabla^3u|^2+C(|Rm(h)|+|\nabla u|^2)|D^2u|^2.
\end{equation}

Since $N$ has dimension three, we only need to bound the Ricci curvature $(R_ij)$. Using type III curvature bound and
 the Bernstein estimate about $|\nabla u|^2$ to the relation
 $$
R_{ij}=S_{ij}+u_iu_j,
$$
we know that $t|Ric(h(t))|\leq C$ for some uniform constant $C>0$ and for all $t\geq \alpha$. Hence $t|Rm(h(t))|$ is uniformly bounded for $t\geq \alpha$.

Let $\mu>0$ large and define
$$
F=|D^2u|^2+\mu |\nabla u|^2.
$$
Then by (\ref{grad-1}) and (\ref{second}) we have
$$
(\partial_t-\Delta_h)F\leq [C(|Rm(h)|+|\nabla u|^2)-\mu]F \leq 0.
$$
Applying the maximum principle we get that
$$
F(t)=|D^2u|^2(t)+\mu |\nabla u|^2(t)\leq C, \ \  t>\alpha
$$
for some uniform constant $C>0$, which depends only on the time $\alpha$.

Let
$$
F_1=(t-\alpha)|D^2u|^2+\mu |\nabla u|^2.
$$
Applying the Bernstein estimate and the estimate of $t|Rm(h)|$, we have for large $\mu>0$,
$$
(\partial_t-\Delta_h)F_1\leq [C(t-\alpha)(|Rm(h)|+|\nabla u|^2)-\mu]\leq 0.
$$
Applying the maximum principle, we have
$$
F_1(t)=(t-\alpha)|D^2u|^2+\mu |\nabla u|^2\leq C
$$
for some uniform constant $C$.

In conclusion we have proved the following result.

\begin{Thm}\label{thm2} Assume that the 4-dimensional warped product $M=N\times_u R$ with $g(t)=h(t)+e^{2u(t)}ds^2$ is type III  solution to the Ricci flow (\ref{Ric-one}). If $N$ is compact with $\alpha>0$, then we have for some uniform constant $C>0$
$$
t[|Rm(h(t))|+|D^2u(t)|^2+|\nabla u(t)|^2]\leq C, \ \  t>\alpha.
$$
 \end{Thm}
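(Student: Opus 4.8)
The plan is to work entirely with List's flow $(h(t),u(t))$ on the compact three--manifold $N$ given by (\ref{Ric-2})--(\ref{heat}): the Ricci flow on the warped product $g(t)=h(t)+e^{2u(t)}ds^2$ is equivalent to this system up to a time--dependent diffeomorphism of $N$, and since curvature is diffeomorphism invariant the Type III bound $\sup_{M\times(0,\infty)}t|Rm(g(t))|=A$ passes unchanged to this gauge. Every quantity that occurs ($u$, $|\nabla u|^2$, $|D^2u|^2$, $Rm(h)$) lives on $N$, so every maximum principle below is applied on the \emph{compact} manifold $N$ --- the non--compactness of the $R$--factor of $M$ never intervenes --- on a time slab $[\beta,T]$ with $0<\beta<\alpha$, on which the flow is smooth. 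The theorem then reduces to three a priori estimates with sharp time decay, each obtained by inserting a well--chosen test function into the scalar maximum principle for the (time--dependent) heat operator $\partial_t-\Delta_{h(t)}$: (i) $t|\nabla u|^2\le C$; (ii) $t|Rm(h)|\le C$; (iii) $t|D^2u|^2\le C$, all on $t>\alpha$, with $C$ depending only on $A$, $n$, $\alpha$, and the geometry of the single time slice $t=\beta$. For (i) I would combine (\ref{grad-1}), which makes $|\nabla u|^2$ a subsolution of $\partial_t-\Delta_h$, with the identity $(\partial_t-\Delta_h)u^2=-2|\nabla u|^2$: a one--line computation gives, for $t\ge\beta$,
$$(\partial_t-\Delta_h)\big[(t-\beta)|\nabla u|^2+u^2\big]=-|\nabla u|^2-2(t-\beta)\big(|D^2u|^2+|\nabla u|^4\big)\le 0,$$
so the spatial maximum of $(t-\beta)|\nabla u|^2+u^2$ is nonincreasing on $[\beta,\infty)$, hence bounded by $\max_N u^2(\beta)$; thus $|\nabla u|^2(t)\le C_\beta/(t-\beta)$, and with $\beta=\alpha/2$ this gives $t|\nabla u|^2(t)\le C$ for $t>\alpha$.

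For (ii) the idea is to feed the ambient Type III bound into the relation $R_{ij}=S_{ij}+u_iu_j$, where $S_{ij}=R_{ij}-u_iu_j$ is List's Ricci tensor. The warped--product curvature formulas identify the Ricci tensor of $h$ with a contraction of $Rm(g)$, so the Type III hypothesis bounds $t|S_{ij}|$ while step (i) bounds $t|u_iu_j|=t|\nabla u|^2$; hence $t|Ric(h(t))|\le C$. Because $\dim N=3$ the Weyl tensor vanishes and $Rm(h)$ is an algebraic function of $Ric(h)$ alone, so $t|Rm(h(t))|\le C$ for $t>\alpha$.

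For (iii) I would exploit (\ref{second}) together with (\ref{grad-1}) in two stages. With $F=|D^2u|^2+\mu|\nabla u|^2$, and since $|Rm(h)|+|\nabla u|^2\le C/\alpha$ on $t>\alpha$ by (i)--(ii), choosing $\mu$ large makes the reaction term in (\ref{second}) dominated, so $(\partial_t-\Delta_h)F\le 0$ on $t>\alpha$ and $|D^2u|^2$ is at least bounded there. To get decay, set $F_1=(t-\beta)|D^2u|^2+\mu|\nabla u|^2$ with $\beta=\alpha/2$; a direct computation using (\ref{second}) and (\ref{grad-1}), discarding the good terms $-2(t-\beta)|\nabla^3u|^2$ and $-2\mu|\nabla u|^4$, gives
$$(\partial_t-\Delta_h)F_1\le\big[\,1+C(t-\beta)\big(|Rm(h)|+|\nabla u|^2\big)-2\mu\,\big]|D^2u|^2.$$
By (i)--(ii) one has $(t-\beta)(|Rm(h)|+|\nabla u|^2)\le t(|Rm(h)|+|\nabla u|^2)\le C$, so taking $\mu$ large makes the bracket nonpositive; the maximum principle then gives $F_1(t)\le F_1(\beta)=\mu|\nabla u|^2(\beta)\le C$, i.e. $(t-\beta)|D^2u|^2(t)\le C$, whence $t|D^2u|^2(t)\le C$ for $t>\alpha$. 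Adding (i), (ii) and (iii) is exactly the assertion of the theorem.

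The main obstacle is step (ii): one has to be certain that the Type III control of the four--dimensional curvature $Rm(g)$ really descends to the intrinsic curvature of $N$, and not merely to some mixture of $Rm(h)$ with the second derivatives $D^2u$ of the warping function --- which is what the warped--product curvature formulas naively produce. The dimension--three reduction is the key that removes this difficulty: it is enough to bound $Ric(h)$, which is a genuine contraction of $Rm(g)$, and this is also what prevents circularity with step (iii), where $|Rm(h)|$ is used as an \emph{input} to the maximum--principle estimate for $|D^2u|^2$. Once (ii) is secured, steps (i) and (iii) are routine Bernstein--type maximum--principle arguments on the compact manifold $N$.
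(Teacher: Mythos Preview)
Your proposal is correct and follows essentially the same route as the paper: the same three-step scheme (Bernstein estimate for $|\nabla u|^2$ via the auxiliary function $(t-\beta)|\nabla u|^2+u^2$; control of $t|Rm(h)|$ through $R_{ij}=S_{ij}+u_iu_j$ together with the three-dimensionality of $N$; and then the Hessian decay via the two test functions $F=|D^2u|^2+\mu|\nabla u|^2$ and $F_1=(t-\beta)|D^2u|^2+\mu|\nabla u|^2$). Your write-up is in fact a bit more careful than the paper's in tracking the $|D^2u|^2$ factor in the evolution inequality for $F_1$ and in flagging the potential circularity between steps (ii) and (iii).
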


We remark that our result above can also been generalized to complete non-compact Riemannian manifolds.


\begin{thebibliography}{20}

\bibitem{C} X.D. Cao, \emph{Isoperimetric estimate for the Ricci flow on $S^2\times S^1$}. Comm. Anal. Geom.  13 (2005),  no. 4, 727-739.

\bibitem{CH}
X.Cao, R.Hamilton, \emph{Differential Hrnack estimate for
time-dependent heat equations with potential}, Geom. Funct. Anal.
19 (2009), no. 4, 989-1000

\bibitem{CZ} X.Cao, Q.Zhang,
\emph{The Conjugate heat equation and ancient solutions of the
Ricci flow}, adv.math. 228 (2011) 2891-2919



\bibitem{Chow} Bennett Chow, Sun-Chin Chu, David Glickenstein,
Christine Guenther, James Isenberg, Tom Ivey, Dan Knopf, Peng Lu,
Feng Luo, and Lei Ni. \emph{The Ricci flow: techniques and
applications}. Part II: Analytic
aspects. Mathematical Surveys and Monographs.
American Mathematical Society, Providence, RI., 2008.


\bibitem{H}
R. Hamilton,  \emph{The formation of singularities in the Ricci flow}.  Surveys in differential geometry, Vol. II (Cambridge, MA, 1993),  7-136, Int. Press, Cambridge, MA, 1995.

\bibitem{K} B.L.Kotschwar, \emph{Hamilton's gradient estimate for the heat kernel on complete manifolds}. Proc. Amer. Math. Soc.  135 (2007),  no. 9, 3013-3019 (electronic).

\bibitem{L}
B. List, \emph{Evolution of an extended Ricci flow system}. Comm. Anal. Geom. 16 (2008),
no. 5, 1007-1048. Zbl 1166.53044 MR 2471366

\bibitem{LY}
P. Li, S.T. Yau, \emph {On the parabolic kernel of the
Schr\"{o}inger operator}, Acta Math. 156 (1986) 153-201.


\bibitem{L}J. Lott, \emph{Some geometric properties of the Bakry-Emery-Ricci tensor}. Comment. Math. Helv.  78  (2003),  no. 4, 865-883.
    
\bibitem{LS} J.Lott, N.Sesum,
\emph{Ricci flow on three-dimensional manifolds with symmetry},
Comment. Math. Helv. 89 (2014), 1-32 DOI 10.4171/CMH/311

\bibitem{M}
Ma, Li, \emph{Gradient estimates for a simple elliptic equation on
complete non-compact Riemannian manifolds}. J. Funct. Anal. 241
(2006), no. 1, 374-382.

\bibitem{M2}
Ma, Li; Zhao, Lin; Song, Xianfa, \emph{Gradient estimate for the
degenerate parabolic equation $u_t=\Delta F(u)+H(u)$ on
manifolds}. J. Differential Equations 244 (2008), no. 5,
1157-1177.

\bibitem{MZ} L. Ma, A.Zhu,  \emph{Eigenvalues and lambda constants on Riemannian submersions}. Geom. Dedicata  129  (2007), 73-82.

\bibitem{Ni}
L.Ni, \emph{The entropy formula for linear Heat equation}, J.
Geom. Anal. 14(2004)87-100. Addenda to "the entropy formula for
linear Heat equation", J. Geom. Anal. 14(2004)369-374

\bibitem{P02} Grisha Perelman,
\emph{The entropy formula for the Ricci flow and its geometric
applications}, http://arxiv.org/abs/math/0211159v1

\bibitem{W}
G.F.Wei, W. Wylie, \emph{Comparison geometry for the Bakry-Emery Ricci tensor}. J. Differential Geom.  83  (2009),  no. 2, 377-405.

\end{thebibliography}
\end{document}